%
%

\documentclass[11pt]{amsart}

\oddsidemargin=.7in \evensidemargin=.7in

\textwidth=6.2in

\addtolength{\oddsidemargin}{-.70in}
\setlength{\evensidemargin}{\oddsidemargin}


\newtheorem{theorem}{Theorem}[section]

\newtheorem{cor}[theorem]{Corollary}

\theoremstyle{definition}
\newtheorem{definition}[theorem]{Definition}

\theoremstyle{remark}

\numberwithin{equation}{section}

\begin{document}

\newcommand{\spacing}[1]{\renewcommand{\baselinestretch}{#1}\large\normalsize}
\spacing{1.14}

\title{Flag curvature of invariant $(\alpha,\beta)$-metrics of type $\frac{(\alpha+\beta)^2}{\alpha}$}

\author {H. R. Salimi Moghaddam}

\address{Department of Mathematics, Faculty of  Sciences, University of Isfahan, Isfahan,81746-73441-Iran.} \email{salimi.moghaddam@gmail.com and hr.salimi@sci.ui.ac.ir}

\keywords{invariant metric, flag curvature,
$(\alpha,\beta)-$metric, homogeneous space, Lie group\\
AMS 2000 Mathematics Subject Classification: 22E60, 53C60, 53C30.}


\begin{abstract}
In this paper we study flag curvature of invariant
$(\alpha,\beta)$-metrics of the form
$\frac{(\alpha+\beta)^2}{\alpha}$ on homogeneous spaces and Lie
groups. We give a formula for flag curvature of invariant metrics
of the form $F=\frac{(\alpha+\beta)^2}{\alpha}$ such that $\alpha$
is induced by an invariant Riemannian metric $g$ on the
homogeneous space and the Chern connection of $F$ coincides to the
Levi-Civita connection of $g$. Then some conclusions in the cases
of naturally reductive homogeneous spaces and Lie groups are
given.
\end{abstract}

\maketitle


\section{\textbf{Introduction}}\label{intro}
Finsler geometry is an interesting field in differential geometry
which has found many applications in physics and biology (see
\cite{AnInMa,As}.). One of important quantities which can be used
for characterizing Finsler spaces is flag curvature. The
computation of flag curvature , which is a generalization of the
concept of sectional curvature in Riemannian geometry, is very
difficult. Therefore finding an explicit formula for computing it
can be useful for characterizing Finsler spaces. Also it can help
us to finding new examples of spaces with some curvature properties.\\
Working on a general Finsler space for finding an explicit formula
for the flag curvature is very computational because of
computation in local coordinates. A family of spaces which has
many applications in physics is homogeneous spaces (in particular,
Lie groups) equipped with invariant metrics. The study of
homogeneous spaces (Lie groups) with invariant Riemannian metrics
has been a very hot field in last decades (for example see
\cite{BrFiSpTaWu,KoNo,Mi,No,Pu}.). During recent years, some of
these results extended to Finsler spaces in some special cases
(see \cite{EsSa1,EsSa2,DeHo1,DeHo2,Sa}.). $(\alpha,\beta)$-metrics
are interesting Finsler metrics which have been studied by many
Finsler geometers. $F=\frac{(\alpha+\beta)^2}{\alpha}$ is a
special $(\alpha,\beta)$-metric which has been studied by Z. Shen
and G. C. Yildirim (see \cite{ShYi}.). In this paper we study flag
curvature of these metrics on homogeneous spaces $G/H$ which are
invariant under the action $G$. We suppose that $\alpha$ is
induced by an invariant Riemannian metric $g$ on the homogeneous
space and the Chern connection of $F$ coincides to the Levi-Civita
connection of $g$. Also we study the special cases when $( G/H , g
)$ is naturally reductive or when $H$ is trivial ($H=\{e\}$).


\section{\textbf{Preliminaries}}
Let $M$ be a smooth manifold. Suppose that $g$ and $b$ are a
Riemannian metric and  a 1-form on $M$ respectively as follows:
\begin{eqnarray*}
  g&=&g_{ij}dx^i\otimes dx^j \\
  b&=&b_idx^i.
\end{eqnarray*}
In this case we can define a function on $TM$ as follows:
\begin{eqnarray*}
  F(x,y)=\frac{(\alpha(x,y)+\beta(x,y))^2}{\alpha(x,y)},
\end{eqnarray*}
where $\alpha(x,y)=\sqrt{g_{ij}(x)y^iy^j}$ and
$\beta(x,y)=b_i(x)y^i$.\\
It has been shown $F$ is a Finsler metric if and only if for any
$x\in M$, $\|\beta_x\|_{\alpha}<1$, where
\begin{eqnarray*}
  \|\beta_x\|_{\alpha}=\sqrt{b_ib^i}=\sqrt{g^{ij}b_ib_j}.
\end{eqnarray*}
In a natural way, the Riemannian metric $g$ induces an inner
product on any cotangent space $T^\ast_xM$ such that
$<dx^i(x),dx^j(x)>=g^{ij}(x)$. The induced inner product on
$T^\ast_xM$ induce a linear isomorphism between $T^\ast_xM$ and
$T_xM$. Then the 1-form $b$ corresponds to a vector field
$\tilde{X}$ on $M$ such that
\begin{eqnarray*}
  g(y,\tilde{X}(x))=\beta(x,y).
\end{eqnarray*}
Also we have $\|\beta(x)\|_{\alpha}=\|\tilde{X}(x)\|_{\alpha}$
(for more details see \cite{DeHo2}.).

Therefore we can write the Finsler metric
$F=\frac{(\alpha+\beta)^2}{\alpha}$ as follows:
\begin{eqnarray*}
  F(x,y)=\frac{(\alpha(x,y)+
  g(\tilde{X}(x),y))^2}{\alpha(x,y)},
\end{eqnarray*}
where for any $x\in M$,
$\sqrt{g(\tilde{X}(x),\tilde{X}(x))}=\|\tilde{X}(x)\|_{\alpha}<1$.\\
In this paper we use this representation of $F$.\\ (for more
details about Finsler metrics see \cite{BaChSh,Sh})

\section{\textbf{Flag curvature of invariant metrics of type $\frac{(\alpha+\beta)^2}{\alpha}$ on homogeneous spaces}}

In this section we give an explicit formula for the flag curvature
of invariant $(\alpha,\beta)$-metrics of type
$\frac{(\alpha+\beta)^2}{\alpha}$, where $\alpha$ is induced by an
invariant Riemannian metric $g$ on the homogeneous space and the
Chern connection of $F$ coincides to the Levi-Civita connection of
$g$. For this purpose we need the P\"uttmann's formula for the
curvature tensor of invariant Riemannian metrics on homogeneous
spaces (see \cite{Pu}.).

Let $G$ be a compact Lie group, $H$ a closed subgroup, and $g_0$ a
bi-invariant Riemannian metric on $G$. Assume that $\frak{g}$ and
$\frak{h}$ are the Lie algebras of $G$ and $H$ respectively. The
tangent space of the homogeneous space $G/H$ is given by the
orthogonal compliment $\frak{m}$ of $\frak{h}$ in $\frak{g}$ with
respect to $g_0$. Suppose that $g$ is an invariant metric on
$G/H$. The values of $g_0$ and $g$ at the identity are inner
products on $\frak{g}$ which we denote as $<.,.>_0$ and $<.,.>$.
The inner product $<.,.>$ determines a positive definite
endomorphism $\phi$ of $\frak{g}$ such that $<X,Y>=<\phi X,Y>_0$
for all $X, Y\in\frak{g}$.\\

\begin{theorem}\label{flagcurvature}
Let $G, H, \frak{g}, \frak{h}, g, g_0$ and $\phi$ be as above.
Assume that $\tilde{X}$ is an invariant vector field on $G/H$
which is $g(\tilde{X},\tilde{X})<1$ and $\tilde{X}_H=X$. Suppose
that $F=\frac{(\alpha+\beta)^2}{\alpha}$ is the Finsler metric
arising from $g$ and $\tilde{X}$ such that its Chern connection
coincides to the Levi-Civita connection of $g$. Suppose that
$(P,Y)$ is a flag in $T_H(G/H)$ such that $\{Y,U\}$ is an
orthonormal basis of $P$ with respect to $<.,.>$. Then the flag
curvature of the flag $(P,Y)$ in $T_H(G/H)$ is given by

\begin{eqnarray}\label{Fcurvatureformula}
  K(P,Y)=\frac{6<X,R(U,Y)Y>.<X,U>+<R(U,Y)Y,U>(1-<X,Y>^2)}{(1+<X,Y>)^4(2<X,U>^2-<X,Y>^2+1)},
\end{eqnarray}
where,
\begin{eqnarray}
    <X,R(U,Y)Y>&=&\frac{1}{4}(<[\phi U,Y]+[U,\phi Y],[Y,X]>_0+<[U,Y],[\phi Y,X]+[Y,\phi X]>_0)\nonumber\\
    &&+\frac{3}{4}<[Y,U],[Y,X]_\frak{m}>+\frac{1}{2}<[U,\phi X]+[X,\phi U],\phi^{-1}([Y,\phi Y])>_0\\
    &&-\frac{1}{4}<[U,\phi Y]+[Y,\phi U],\phi^{-1}([Y,\phi X]+[X,\phi Y])>_0,\nonumber
\end{eqnarray}
and
\begin{eqnarray}
  <R(U,Y)Y,U>&=&\frac{1}{2}<[\phi U,Y]+[U,\phi Y],[Y,U]>_0\nonumber \\
  && \ \ \ +\frac{3}{4}<[Y,U],[Y,U]_{\frak{m}}>+<[U,\phi U],\phi^{-1}([Y,\phi Y])>_0 \\
  && \ \ \ -\frac{1}{4}<[U,\phi Y]+[Y,\phi U],\phi^{-1}([Y,\phi U]+[U, \phi Y])>_0.\nonumber
\end{eqnarray}
\end{theorem}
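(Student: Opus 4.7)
The plan is to combine the standard flag curvature formula in Finsler geometry with the explicit structure of the fundamental tensor of the $(\alpha,\beta)$-metric $F=\frac{(\alpha+\beta)^2}{\alpha}$, and then invoke P\"uttmann's formula to express the Riemannian curvature components.

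First I would normalize so that $\alpha(Y)=\sqrt{\langle Y,Y\rangle}=1$, which is harmless since the flag curvature is invariant under positive scaling of $Y$. For a Finsler metric one has the general expression
\begin{equation*}
K(P,Y)=\frac{g_Y(R(U,Y)Y,U)}{g_Y(Y,Y)g_Y(U,U)-g_Y(Y,U)^2},
\end{equation*}
where $g_Y$ is the fundamental tensor and $R$ is the Chern curvature tensor. The central simplifying hypothesis is that the Chern connection of $F$ coincides with the Levi-Civita connection of $g$; hence $R$ is \emph{literally} the Riemannian curvature of $g$, and in particular $\langle R(U,Y)Y,Y\rangle=0$ — a fact I will exploit below.

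Next I would compute $g_Y$. Writing $F=\alpha\,\varphi(s)$ with $s=\beta/\alpha$ and $\varphi(s)=(1+s)^{2}$, the standard formula for the fundamental tensor of an $(\alpha,\beta)$-metric gives
\begin{equation*}
g_Y(V,W)=\rho\,\langle V,W\rangle+\rho_0\,\beta(V)\beta(W)+\rho_1\bigl[\beta(V)\langle Y,W\rangle+\beta(W)\langle Y,V\rangle\bigr]-s\rho_1\langle Y,V\rangle\langle Y,W\rangle,
\end{equation*}
where $\rho=\varphi^{2}-s\varphi\varphi'$, $\rho_0=\varphi\varphi''+(\varphi')^{2}$, $\rho_1=\varphi\varphi'-s\rho_0$. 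With $\varphi(s)=(1+s)^{2}$ these specialize to $\rho=(1+s)^{3}(1-s)$, $\rho_0=6(1+s)^{2}$, $\rho_1=2(1+s)^{2}(1-2s)$. Using $\langle Y,Y\rangle=1$, $\langle U,U\rangle=1$, $\langle Y,U\rangle=0$, $\beta(Y)=\langle X,Y\rangle=s$ and $\beta(U)=\langle X,U\rangle$, a short algebraic simplification yields
\begin{equation*}
g_Y(Y,Y)=(1+s)^{4},\qquad g_Y(U,U)=(1+s)^{2}\bigl(1-s^{2}+6\langle X,U\rangle^{2}\bigr),\qquad g_Y(Y,U)=2\langle X,U\rangle(1+s)^{3},
\end{equation*}
so that the denominator of the flag curvature formula collapses to $(1+s)^{6}\bigl(2\langle X,U\rangle^{2}-s^{2}+1\bigr)$.

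For the numerator I would evaluate $g_Y(R(U,Y)Y,U)$ using the same formula, crucially invoking $\langle Y,R(U,Y)Y\rangle=0$ to kill the terms containing $\langle Y,V\rangle$. Only the $\rho$ and $\rho_0$ terms survive, producing
\begin{equation*}
g_Y(R(U,Y)Y,U)=(1+s)^{2}\bigl[(1-s^{2})\langle R(U,Y)Y,U\rangle+6\langle X,R(U,Y)Y\rangle\langle X,U\rangle\bigr].
\end{equation*}
Dividing by the denominator, the common factor $(1+s)^{2}$ cancels and formula \eqref{Fcurvatureformula} emerges. Finally, the explicit expressions for $\langle R(U,Y)Y,U\rangle$ and $\langle X,R(U,Y)Y\rangle$ are obtained by directly substituting the tangent vectors $U,Y,Y,U$ (respectively $U,Y,Y,X$) into P\"uttmann's formula for $\langle R(A,B)C,D\rangle$ on $(G/H,g)$.

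The routine but algebraically delicate step is the simplification of the fundamental tensor contractions; the main conceptual obstacle is isolating why only $\rho$ and $\rho_0$ contribute to the numerator, which is precisely where the Riemannian identity $\langle R(U,Y)Y,Y\rangle=0$ (available thanks to the Chern $=$ Levi-Civita hypothesis) plays its decisive role.
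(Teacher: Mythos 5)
Your proposal is correct and follows essentially the same route as the paper: the standard flag curvature formula $K(P,Y)=g_Y(R(U,Y)Y,U)/\bigl(g_Y(Y,Y)g_Y(U,U)-g_Y(Y,U)^2\bigr)$, the identification $R^F=R^g$ from the Chern $=$ Levi-Civita hypothesis together with $\langle R(U,Y)Y,Y\rangle=0$, the orthonormality of $\{Y,U\}$ to collapse numerator and denominator, and P\"uttmann's formula for the curvature entries. The only difference is presentational: you compute $g_Y$ via the general $(\alpha,\beta)$-fundamental-tensor coefficients $\rho,\rho_0,\rho_1$ for $\varphi(s)=(1+s)^2$, whereas the paper differentiates $F^2(Y+sU+tV)$ by brute force; your packaging is cleaner but yields the identical intermediate quantities.
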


\begin{proof}
The Chern connection of $F$ coincides on the Levi-Civita
connection of $g$ therefore we have $R^F(U,V)W=R^g(U,V)W$, where
$R^F$ and $R^g$ are the curvature tensors of $F$ and $g$,
respectively. Let $R:=R^g=R^F$ be the curvature tensor of $F$ (or
$g$). The flag curvature defined as follows (\cite{Sh}):
\begin{equation}\label{flag}
   K(P,Y)=\frac{g_Y(R(U,Y)Y,U)}{g_Y(Y,Y).g_Y(U,U)-g_Y^2(Y,U)},
\end{equation}
where $g_Y(U,V)=\frac{1}{2}\frac{\partial^2}{\partial s\partial
t}(F^2(Y+sU+tV))|_{s=t=0}$.\\
By using the definition of $g_Y(U,V)$ and some computations for
$F$ we have:
\begin{eqnarray}\label{g_Y}
    g_Y(U,V)&=&\frac{4(\sqrt{g(Y,Y)}+g(X,Y))^3}{g(Y,Y)^{5/2}}\{g(X,V)g(Y,U)-g(Y,V)
    g(X,U)\}\nonumber\\
    &+&\frac{2(\sqrt{g(Y,Y)}+g(X,Y))^2}{g(Y,Y)}\{g(U,V)+g(X,U)g(X,V)\\
    &-&\frac{g(X,Y)g(Y,V)g(Y,U)}{g(Y,Y)^{3/2}}+\frac{1}{\sqrt{g(Y,Y)}}(g(X,U)g(Y,V)\nonumber\\
    &+&g(X,Y)g(U,V)+g(X,V)g(Y,U))\}+\frac{(\sqrt{g(Y,Y)}+g(X,Y))^4}{g(Y,Y)^3}\nonumber\\
    &&\{4g(Y,U)g(Y,V)-g(U,V)g(Y,Y)\}+\frac{4(\sqrt{g(Y,Y)}+g(X,Y))^2}{g(Y,Y)}\nonumber\\
    &&(\frac{g(Y,V)}{\sqrt{g(Y,Y)}}+g(X,V))(\frac{g(Y,U)}{\sqrt{g(Y,Y)}}+g(X,U)-\frac{2g(Y,U)}{\sqrt{g(Y,Y)}}\nonumber\\
    && \ \ \ \ \ \ \ -\frac{2g(X,Y)g(Y,U)}{g(Y,Y)}).\nonumber
\end{eqnarray}
By using (\ref{g_Y}) and this fact that $\{Y,U\}$ is an
orthonormal basis for $P$ with respect to $g$ we have:
\begin{eqnarray}\label{eq1}
  g_Y(R(U,Y)Y,U)&=&(1+<X,Y>)^2\{2<X,U>.<Y,R(U,Y)Y>.(1-2<X,Y>)\nonumber\\
  &&+6<X,R(U,Y)Y>.<X,U>+<R(U,Y)Y,U>.(1-<X,Y>^2)\},
\end{eqnarray}

and
\begin{eqnarray}\label{eq2}
  g_Y(Y,Y).g_Y(U,U)-g^2_Y(U,Y)&=&(1+<X,Y>))^6\nonumber\\
  &&(2<X,U>^2-<X,Y>^2+1).
\end{eqnarray}
Also by using P\"uttmann's formula (see \cite{Pu} or \cite{Sa}.)
and some computations we have:
\begin{eqnarray}\label{eq3}
<X,R(U,Y)Y>&=&\frac{1}{4}(<[\phi U,Y]+[U,\phi Y],[Y,X]>_0+<[U,Y],[\phi Y,X]+[Y,\phi X]>_0)\nonumber\\
    &&+\frac{3}{4}<[Y,U],[Y,X]_\frak{m}>+\frac{1}{2}<[U,\phi X]+[X,\phi U],\phi^{-1}([Y,\phi Y])>_0\\
    &&-\frac{1}{4}<[U,\phi Y]+[Y,\phi U],\phi^{-1}([Y,\phi X]+[X,\phi
    Y])>_0,\nonumber
\end{eqnarray}
\begin{eqnarray}\label{eq4}
  <R(U,Y)Y,Y>=0,
\end{eqnarray}
and
\begin{eqnarray}\label{eq5}
  <R(U,Y)Y,U>&=&\frac{1}{2}<[\phi U,Y]+[U,\phi Y],[Y,X]>_0+\frac{3}{4}<[Y,U],[Y,U]_{\frak{m}}>\nonumber\\
             &&+<[U,\phi U],\phi^{-1}([Y,\phi Y])>_0\\
             && -\frac{1}{4}<[U,\phi Y]+[Y,\phi U],\phi^{-1}([Y,\phi U]+[U, \phi Y])>_0.\nonumber
\end{eqnarray}
Substituting the equations (\ref{eq1}), (\ref{eq2}), (\ref{eq3}),
(\ref{eq4}) and (\ref{eq5}) in the equation (\ref{flag}) completes
the proof.

\end{proof}

Now we consider a special case of Riemannian homogeneous spaces
which has been named naturally reductive. In this case the above
formula for the flag curvature reduces to a simpler equation.

\begin{definition}
(See \cite{KoNo}) A homogeneous space $M=G/H$ with a $G-$invariant
indefinite Riemannian metric $g$ is said to be naturally reductive
if it admits an $ad(H)$-invariant decomposition
$\frak{g}=\frak{h}+\frak{m}$ satisfying the condition
\begin{eqnarray*}
 B(X,[Z,Y]_{\frak{m}})+B([Z,X]_{\frak{m}},Y)=0 \hspace{1.5cm}\mbox{for} \ \ \ X, Y, Z \in
 \frak{m},
\end{eqnarray*}
where $B$ is the bilinear form on $\frak{m}$ induced by $\frak{g}$
and $[,]_{\frak{m}}$ is the projection to $\frak{m}$ with respect
to the decomposition $\frak{g}=\frak{h}+\frak{m}$.
\end{definition}

\begin{theorem}
In the previous theorem let $G/H$ be a naturally reductive
homogeneous space. Then the flag curvature of the flag $(P,Y)$ in
$T_H(G/H)$ is given by
\begin{eqnarray*}\label{flagnatural}
  K(P,Y)=\frac{6<X,R(U,Y)Y>.<X,U>+<R(U,Y)Y,U>(1-<X,Y>^2)}{(1+<X,Y>)^4(2<X,U>^2-<X,Y>^2+1)},
\end{eqnarray*}
where,
\begin{eqnarray}
    <X,R(U,Y)Y>&=&\frac{1}{4}<X,[Y,[U,Y]_{\frak{m}}]_{\frak{m}}>+<X,[Y,[U,Y]_{\frak{h}}]>
\end{eqnarray}
and
\begin{eqnarray}
  <R(U,Y)Y,U>&=&\frac{1}{4}<U,[Y,[U,Y]_{\frak{m}}]_{\frak{m}}>+<U,[Y,[U,Y]_{\frak{h}}]>
\end{eqnarray}
Notice that $[,]_{\frak{m}}$ and $[,]_{\frak{h}}$ are the
projections of $[,]$ to $\frak{m}$ and $\frak{h}$ respectively.\\
\end{theorem}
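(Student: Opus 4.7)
The plan is to take the flag curvature expression of Theorem 3.1 as the starting point. Since the outer formula for $K(P,Y)$ depends only on the two scalars $\langle X, R(U,Y)Y\rangle$ and $\langle R(U,Y)Y, U\rangle$, the whole task reduces to rewriting these two inner products under the naturally reductive hypothesis; the denominator and the combinatorial $(1+\langle X,Y\rangle)$-factors are untouched.

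My first step is to replace the $\phi$-heavy P\"uttmann expressions in $(3.2)$ and $(3.3)$ with the classical curvature formula for a naturally reductive homogeneous space, namely
\[
R(V,Y)Z \;=\; \tfrac{1}{4}\bigl[V,[Y,Z]_{\frak{m}}\bigr]_{\frak{m}} - \tfrac{1}{4}\bigl[Y,[V,Z]_{\frak{m}}\bigr]_{\frak{m}} - \tfrac{1}{2}\bigl[[V,Y]_{\frak{m}},Z\bigr]_{\frak{m}} - \bigl[[V,Y]_{\frak{h}},Z\bigr]
\]
for $V,Y,Z\in\frak{m}$ at the base point $H$. This is the standard identity recorded in Kobayashi--Nomizu; to stay internal to the paper one can equally recover it from P\"uttmann's formulas by choosing $g_0$ so that $\phi|_{\frak{m}} = I$ and invoking the defining identity $\langle X,[Z,Y]_{\frak{m}}\rangle + \langle [Z,X]_{\frak{m}},Y\rangle = 0$ term by term, which collapses every $\phi^{-1}$ contribution. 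Specializing to $V = U$ and $Z = Y$, the first summand vanishes since $[Y,Y]=0$, and the third becomes $+\tfrac{1}{2}\bigl[Y,[U,Y]_{\frak{m}}\bigr]_{\frak{m}}$ after the antisymmetrization $\bigl[[U,Y]_{\frak{m}},Y\bigr] = -\bigl[Y,[U,Y]_{\frak{m}}\bigr]$, so that the second and third combine to yield
\[
R(U,Y)Y \;=\; \tfrac{1}{4}\bigl[Y,[U,Y]_{\frak{m}}\bigr]_{\frak{m}} + \bigl[Y,[U,Y]_{\frak{h}}\bigr].
\]
Pairing this identity with $X$ and then with $U$ under $\langle\cdot,\cdot\rangle$ and substituting into equation $(3.1)$ produces exactly the two claimed formulas and closes the argument.

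The main obstacle here is bookkeeping rather than anything conceptual: one must track signs correctly through the antisymmetrization of $\bigl[[U,Y]_{\frak{m}},Y\bigr]$ and confirm that the $\frak{h}$-component is carried through without a further projection, so that $\bigl[Y,[U,Y]_{\frak{h}}\bigr]$ appears with coefficient exactly $+1$ rather than $+\tfrac{1}{2}$ or $-1$. Should one instead prefer a fully self-contained derivation starting from $(3.2)$--$(3.3)$, the lengthier obstacle is verifying the term-by-term cancellation of all $\phi$- and $\phi^{-1}$-containing summands, for which the natural reductivity identity must be applied to each paired bracket in turn; this is routine but notationally heavy.
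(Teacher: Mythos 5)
Your proposal is correct and follows essentially the same route as the paper: both invoke the Kobayashi--Nomizu curvature formula for naturally reductive spaces (Proposition 3.4, p.~202), specialize it to $R(U,Y)Y=\frac{1}{4}[Y,[U,Y]_{\frak{m}}]_{\frak{m}}+[Y,[U,Y]_{\frak{h}}]$, and substitute into the flag curvature expression of Theorem \ref{flagcurvature}. Your sign bookkeeping ($-\frac{1}{4}+\frac{1}{2}=\frac{1}{4}$ for the $\frak{m}$-part and coefficient $+1$ for the $\frak{h}$-part) checks out.
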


\begin{proof}
By using Proposition 3.4 in \cite{KoNo} (page 202) we have:
\begin{eqnarray*}
 R(U,V)W&=&\frac{1}{4}[U,[V,W]_{\frak{m}}]_\frak{m}-\frac{1}{4}[V,[U,W]_{\frak{m}}]_{\frak{m}}\\
 &-&\frac{1}{2}[[U,V]_{\frak{m}},W]_{\frak{m}}-[[U,V]_{\frak{h}},W]
 \ \ \ \mbox{for}  \ \ \ U,V,W \in \frak{m},
\end{eqnarray*}
hence
\begin{eqnarray*}
R(U,Y)Y=\frac{1}{4}[Y,[U,Y]_{\frak{m}}]_{\frak{m}}+[Y,[U,Y]_{\frak{h}}].
\end{eqnarray*}
Now by substituting the last relation in the formula which is
obtained in theorem \ref{flagcurvature} the proof is completed.
\end{proof}

As a special case of naturally reductive Riemannian homogeneous
spaces we can consider Lie groups equipped with bi-invariant
Riemannian metrics. Therefore we have the following corollary.

\begin{cor}
Let $G$ be a Lie group, $g$ be a bi-invariant Riemannian metric on
$G$, and $\tilde{X}$ be a left invariant vector field on $G$ such
that $g(\tilde{X},\tilde{X})<1$. Suppose that
$F=\frac{(\alpha+\beta)^2}{\alpha}$ is the Finsler metric arising
from $g$ and $\tilde{X}$ on $G$ such that the Chern connection of
$F$ coincides on the Levi-Civita connection of $g$. Then for the
flag curvature of the flag $P=span\{Y,U\}$, where $\{Y,U\}$ is an
orthonormal basis for $P$ with respect to $g$, we have:
\begin{eqnarray*}\label{flagnatural}
  K(P,Y)=\frac{6<X,[Y,[U,Y]]>.<X,U>+<U,[Y,[U,Y]]>(1-<X,Y>^2)}{4(1+<X,Y>)^4(2<X,U>^2-<X,Y>^2+1)},
\end{eqnarray*}
\end{cor}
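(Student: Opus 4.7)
The plan is to obtain this corollary as a direct specialization of the preceding theorem on naturally reductive spaces, applied in the case where the isotropy subgroup is trivial. A Lie group $G$ equipped with a bi-invariant Riemannian metric is a (symmetric, hence) naturally reductive homogeneous space under its own left action, and the natural reductive decomposition is just $\mathfrak{g}=\{0\}\oplus\mathfrak{g}$, so $\mathfrak{h}=0$ and $\mathfrak{m}=\mathfrak{g}$. In particular the hypothesis that the Chern connection of $F$ coincides with the Levi-Civita connection of $g$ makes the previous theorem applicable verbatim.

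With $\mathfrak{h}=0$, I would observe that the projection $[\cdot,\cdot]_{\mathfrak{h}}$ vanishes identically while $[\cdot,\cdot]_{\mathfrak{m}}$ agrees with the ordinary Lie bracket on $\mathfrak{g}$. Substituting this into the two expressions for $\langle X,R(U,Y)Y\rangle$ and $\langle R(U,Y)Y,U\rangle$ furnished by the naturally reductive theorem collapses each of them to a single surviving term:
\begin{eqnarray*}
\langle X, R(U,Y)Y\rangle &=& \tfrac{1}{4}\langle X,[Y,[U,Y]]\rangle,\\
\langle R(U,Y)Y, U\rangle &=& \tfrac{1}{4}\langle U,[Y,[U,Y]]\rangle.
\end{eqnarray*}

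Finally I would plug these two identities into the flag curvature formula (\ref{Fcurvatureformula}). The common factor $\tfrac{1}{4}$ appearing in the numerator factors out, and combining it with the denominator $(1+\langle X,Y\rangle)^4(2\langle X,U\rangle^2-\langle X,Y\rangle^2+1)$ produces exactly the extra factor of $4$ in the denominator of the claimed formula. There is no real obstacle here: the entire content of the corollary is the reduction to trivial isotropy; the remaining manipulation is purely arithmetic bookkeeping.
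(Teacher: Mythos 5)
Your proposal is correct and follows exactly the paper's route: the paper likewise treats $(G,g)$ as the naturally reductive homogeneous space $G/\{e\}$ and invokes the preceding theorem, with the $[\cdot,\cdot]_{\frak{h}}$ terms vanishing and the common factor $\tfrac{1}{4}$ accounting for the $4$ in the denominator. You merely make explicit the bookkeeping that the paper leaves implicit.
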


\begin{proof}
$g$ is bi-invariant therefore $(G,g)$ is naturally reductive. Now
by using theorem \ref{flagnatural} proof is completed.
\end{proof}

Now we give some results which limit Lie groups that have a
Finsler metric of type described in theorem \ref{flagcurvature}.

\begin{theorem}
There is no left invariant non-Riemannian $(\alpha,\beta)-$metric
of type described in theorem \ref{flagcurvature} on connected Lie
groups with a perfect Lie algebra, that is, a Lie algebra
$\frak{g}$ for which the equation $[\frak{g},\frak{g}]=\frak{g}$
holds.
\end{theorem}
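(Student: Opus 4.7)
The plan is to reduce the statement to a simple linear-algebraic fact on $\frak{g}$ and then exploit perfectness. The first step is to unpack the hypothesis that the Chern connection of $F$ coincides with the Levi-Civita connection of $g$. For an $(\alpha,\beta)$-metric of the type considered here, this coincidence is equivalent to $\beta$ being parallel with respect to $\alpha$, i.e., to $\nabla^{g}\tilde{X}\equiv 0$ on $G$; I would invoke this standard fact from the theory of $(\alpha,\beta)$-metrics rather than reprove it.

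Next I would evaluate the parallelism condition at the identity using the Koszul formula specialized to left-invariant fields. Since $g$ is left-invariant and $\tilde{X}$, $U$, $W$ are left-invariant, all the functions $g(\cdot,\cdot)$ that appear are constants, so Koszul reduces to
\begin{equation*}
 2g(\nabla_{U}\tilde{X},W)=g([U,\tilde{X}],W)-g(\tilde{X},[U,W])-g(U,[\tilde{X},W]).
\end{equation*}
Writing $X:=\tilde{X}_e\in\frak{g}$ and setting $\nabla_U\tilde{X}=0$ for all $U\in\frak{g}$ yields the purely algebraic identity
\begin{equation*}
 g([U,X],W)-g(U,[X,W])=g(X,[U,W]) \qquad \text{for all } U,W\in\frak{g}.
\end{equation*}

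The decisive observation is that the left-hand side is symmetric in $(U,W)$ while the right-hand side is antisymmetric. To see the symmetry, swap $U\leftrightarrow W$: using symmetry of $g$ and antisymmetry of the bracket one computes $g([W,X],U)-g(W,[X,U])=g([U,X],W)-g(U,[X,W])$; meanwhile $g(X,[W,U])=-g(X,[U,W])$. A symmetric bilinear form that equals an antisymmetric one must vanish, so in particular
\begin{equation*}
 g(X,[U,W])=0 \qquad \text{for all } U,W\in\frak{g},
\end{equation*}
which means $X$ is $g$-orthogonal to $[\frak{g},\frak{g}]$. Under the perfectness assumption $[\frak{g},\frak{g}]=\frak{g}$, this forces $X=0$; hence $\tilde{X}\equiv 0$, $\beta\equiv 0$, and $F=\alpha$ is Riemannian, contradicting the non-Riemannian assumption. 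The only delicate point in the argument is the opening equivalence between coincidence of the two connections and parallelism of $\tilde{X}$; once that is accepted, the rest is a one-line symmetric/antisymmetric split followed by the perfectness hypothesis.
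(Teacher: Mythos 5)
Your proposal is correct and follows essentially the same route as the paper: coincidence of the Chern and Levi-Civita connections forces $\tilde{X}$ to be parallel, parallelism forces $g(X,[\frak{g},\frak{g}])=0$, and perfectness then kills $X$. The only difference is that where the paper cites Lemma 4.3 of Brown--Finck--Spencer--Tapp--Wu for the orthogonality $g(X,[\frak{g},\frak{g}])=0$, you reprove it directly via the Koszul formula and the symmetric/antisymmetric splitting (which in fact also recovers the skew-adjointness of $ad(X)$ used later in the paper), making the argument self-contained.
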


\begin{proof}
If the Chern connection of $F$ coincide on Levi-Civita connection
of the left invariant Riemannian metric $g$ then, $F$ is of
Berwald type. Therefore left invariant vector field $X$ is
parallel with respect to $g$ and by using Lemma 4.3 of
\cite{BrFiSpTaWu}, $g(X,[\frak{g},\frak{g}])=0$. Since $\frak{g}$
is perfect therefore $X$ must be zero.
\end{proof}

\begin{cor}
There is not any left invariant non-Riemannian
$(\alpha,\beta)-$metric of type described in theorem
\ref{flagcurvature} on semisimple connected Lie groups.
\end{cor}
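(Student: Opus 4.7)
The plan is to reduce this statement immediately to the preceding theorem, which already handles every connected Lie group whose Lie algebra $\mathfrak{g}$ is perfect, i.e.\ satisfies $[\mathfrak{g},\mathfrak{g}] = \mathfrak{g}$. So the only thing to verify is that the Lie algebra of a semisimple connected Lie group is perfect.

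First I would recall the standard structural fact that a semisimple Lie algebra $\mathfrak{g}$ decomposes as a direct sum $\mathfrak{g} = \mathfrak{g}_1 \oplus \cdots \oplus \mathfrak{g}_k$ of simple ideals. For each simple ideal $\mathfrak{g}_i$, the derived ideal $[\mathfrak{g}_i,\mathfrak{g}_i]$ is an ideal of $\mathfrak{g}_i$ which is nonzero (otherwise $\mathfrak{g}_i$ would be abelian, contradicting simplicity since simple algebras are by definition non-abelian with no nontrivial ideals), hence equals $\mathfrak{g}_i$. Taking the direct sum, $[\mathfrak{g},\mathfrak{g}] = \bigoplus_i [\mathfrak{g}_i,\mathfrak{g}_i] = \mathfrak{g}$, so $\mathfrak{g}$ is perfect.

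With perfection in hand, the previous theorem applies verbatim: any left invariant $(\alpha,\beta)$-metric of type $F = (\alpha+\beta)^2/\alpha$ whose Chern connection coincides with the Levi-Civita connection of the underlying invariant Riemannian metric $g$ must have its associated left invariant vector field $\tilde X$ satisfy $g(\tilde X, [\mathfrak{g},\mathfrak{g}]) = 0$, forcing $\tilde X = 0$ on a perfect Lie algebra. But $\tilde X = 0$ means $\beta \equiv 0$, and then $F = \alpha$ is Riemannian, contradicting the hypothesis that the metric is non-Riemannian. This finishes the argument.

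There is no real obstacle here; the statement is a clean specialization of the preceding theorem via the well-known fact that semisimplicity implies perfection. The only care needed is to check that the appeal is legitimate, namely that the hypotheses of the previous theorem (connectedness and perfectness of $\mathfrak{g}$) are both present, which they are by assumption and by the argument above respectively.
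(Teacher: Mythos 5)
Your proposal is correct and follows exactly the route the paper intends: the corollary is stated without proof precisely because it is an immediate specialization of the preceding theorem once one notes that a semisimple Lie algebra is perfect. Your explicit verification of perfection via the decomposition into simple ideals just fills in the standard fact the paper leaves implicit.
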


\begin{cor}
If a Lie group $G$ admits a left invariant non-Riemannian
$(\alpha,\beta)-$metric of type described in theorem
\ref{flagcurvature} then for sectional curvature of the Riemannian
metric $g$ we have
\begin{eqnarray*}
  K(X,u)\geq 0
\end{eqnarray*}
for all $u$, where equality holds if and only if $u$ is orthogonal
to the image $[X,\frak{g}]$.
\end{cor}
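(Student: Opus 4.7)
The plan is to reduce the statement to a classical result on the sectional curvature of left-invariant Riemannian metrics, exploiting the fact, already established in the proof of the previous theorem, that $X$ is parallel with respect to the Levi-Civita connection of $g$.

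First I would record the algebraic consequences of $\nabla X=0$. Lemma 4.3 of \cite{BrFiSpTaWu}, invoked in the preceding theorem, yields $g(X,[\frak{g},\frak{g}])=0$. Complementing this, the Koszul formula for left-invariant fields,
\[
2\,g(\nabla_Y X,Z)=g([Y,X],Z)-g([X,Z],Y)-g([Y,Z],X),
\]
together with $\nabla_Y X=0$ and $g(X,[Y,Z])=0$, gives $g([X,Y],Z)+g(Y,[X,Z])=0$; that is, $\mathrm{ad}_X$ is skew-adjoint with respect to $g$.

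Next, I would invoke Milnor's lemma from \cite{Mi}: whenever $\mathrm{ad}_X$ is skew-adjoint with respect to a left-invariant inner product, the one-parameter subgroup generated by $X$ acts by isometries, and the sectional curvature satisfies $K(X,u)\geq 0$ for every $u$, with equality precisely when $u\in\ker(\mathrm{ad}_X)$. Skew-adjointness of $\mathrm{ad}_X$ forces $\ker(\mathrm{ad}_X)$ to coincide with the orthogonal complement of the image $[X,\frak{g}]$, which is exactly the equality condition stated in the corollary.

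The main point to verify is the precise form and applicability of Milnor's lemma; the translation of parallelism into skew-adjointness is essentially a one-line calculation. If one wished to avoid the citation, the alternative is a self-contained Koszul-level computation of $\langle R(X,u)u,X\rangle$: using $\nabla_\cdot X=0$, skew-adjointness of $\mathrm{ad}_X$, and $g(X,[\frak{g},\frak{g}])=0$, the terms rearrange into a non-negative expression whose vanishing is controlled by $[X,u]$. That computation is routine but tedious, which is why the citation route is cleaner.
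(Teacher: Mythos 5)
Your proposal is correct and follows essentially the same route as the paper: Berwald type gives $\nabla X=0$, hence $ad(X)$ is skew-adjoint (the paper gets this directly from Lemma 4.3 of the Brown--Finck--Spencer--Tapp--Wu paper, while you rederive it from the Koszul formula), and Milnor's Lemma 1.2 then yields $K(X,u)\geq 0$ with the stated equality condition. The only cosmetic difference is your small self-contained verification of skew-adjointness and your rephrasing of the equality case via $\ker(ad_X)=([X,\frak{g}])^{\perp}$, both of which are fine.
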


\begin{proof}
Since $F$ is of Berwald type, $X$ is parallel with respect to $g$.
By using Lemma 4.3 of \cite{BrFiSpTaWu}, $ad(X)$ is skew-adjoint,
therefore by Lemma 1.2 of \cite{Mi} we have $K(X,u)\geq 0$.
\end{proof}






\bibliographystyle{amsplain}

\end{document}